\documentclass[12pt,a4paper,oneside,reqno]{article}

\usepackage[utf8]{inputenc}
\usepackage[english]{babel}
\usepackage[left=1in,right=1in,top=1in,bottom=1in]{geometry}
\usepackage{amsfonts,amssymb,amsmath,amsthm}
\usepackage[symbol]{footmisc}
\usepackage{longtable}
\usepackage{enumerate}
\usepackage{cite}
\usepackage{hyperref}
\usepackage{fancyhdr}
\theoremstyle{plain}
\newtheorem{theorem}{Theorem}
\newtheorem{lemma}[theorem]{Lemma}
\newtheorem{proposition}[theorem]{Proposition}
\newtheorem{corollary}[theorem]{Corollary}
\theoremstyle{definition}
\newtheorem{definition}[theorem]{Definition}
\newtheorem{remark}[theorem]{Remark}

\DeclareMathOperator{\Id}{\rm Id}
\DeclareMathOperator{\Aut}{\rm Aut}
\DeclareMathOperator{\tr}{\rm tr}

\usepackage{fouriernc}

\begin{document}

\bigskip

\noindent{\Large
Classification of
$3$-dimensional unital commutative algebras}\footnote{
The work is supported by
FCT 2023.08031.CEECIND and UID/00212/2025. The original concept, theoretical framework, and primary results were developed by M.E. and I.K.; 
while Aristotle (https://aristotle.harmonic.fun/)   acted as a non-human research assistant, assisting in error verification and proof simplification.}

\bigskip

\begin{center}
{\bf
Maqpal Eraliyeva\footnote{Romanovsky Institute of Mathematics,   Uzbekistan; \ eraliyevamaqpal@gmail.com}
\&
Ivan Kaygorodov\footnote{CMA-UBI, University of Beira Interior,   Portugal; \ kaygorodov.ivan@gmail.com}}
\end{center}

\noindent {\bf Abstract:}
{\it We present the complete classification of non-isomorphic complex $3$-dimensional unital commutative algebras.
Every such algebra decomposes canonically as $\mathbb Ce_1\oplus V$, where $e_1$ is the unit and $V$ is the kernel of the trace form; the induced traceless $2$-dimensional commutative algebra on $V$ is one of four algebras, and the classification reduces to the classification of symmetric bilinear forms on $V$ up to the automorphisms of this kernel.
The answer consists of six algebras, three one-parameter families, one two-parameter family and one three-parameter family.}

\medskip

\noindent {\bf Keywords}:
{\it unital algebra, commutative algebra, algebraic classification.}

\medskip

\noindent {\bf MSC2020}:
17A30 (primary);
17A99 (secondary).

\medskip

\tableofcontents

\section*{Introduction}
The algebraic and geometric classifications of $n$-dimensional algebras from a certain variety defined by a certain family of polynomial identities are classic problems in the theory of non-associative algebras \cite{akt,   FKS25, FKS25-2, k23, MS}. The complete classification of $n$-dimensional algebras is given only for $n=2$ (see \cite{Volkov2019, P00})\footnote{The list of \cite[Theorem~3]{Volkov2019} is not complete: the two-parameter family ${\bf D}_4(\alpha,\beta)$: $e_1e_1=e_1$, $e_1e_2=(1-\alpha)e_1+\beta e_2$, $e_2e_1=\alpha e_1+(1-\beta)e_2$ of pairwise non-isomorphic algebras of class ${\bf D}$ is missing (it is lost in the case $c_{12}^1+c_{21}^1\neq0$, $c_{12}^2+c_{21}^2=1$ of the proof of \cite[Lemma~7]{Volkov2019}); in particular, the commutative algebra ${\bf D}_4(\frac12,\frac12)$ is missing in the list of commutative algebras. Moreover, the algebras ${\bf E}_3(\alpha,\beta,-1)$ and ${\bf E}_3(\beta,\alpha,-1)$ from that list are isomorphic. The results of the present paper do not depend on the list of \cite{Volkov2019}, see Lemma~\ref{lem:traceless} and Remark~\ref{rem:oldkernels}.}. Namely, over the complex field, all $2$-dimensional algebras are described as the union of a family of algebras with $4$ infinite parameters and some families with fewer parameters.
It is easy to understand that in the case of $3$-dimensional algebras, the complete classification of complex algebras can be described as the union of a family of algebras with $18$ infinite parameters and some families with fewer parameters. This is one of the reasons why we still do not have a complete classification of complex $3$-dimensional algebras. Nevertheless, some particular cases of $3$-dimensional algebras have been classified.
Classifications are known for $3$-dimensional:
anticommutative \cite{IKV19},
$\delta$-Novikov \cite{AKL},
semi-alternative  \cite{AANS},
transposed Poisson \cite{BFK23},
commutative post-Lie \cite{AAK},
and evolution \cite{ccsmv} algebras;
as well as $3$-dimensional nil-algebras \cite{KS25},
$3$-dimensional right-alternative and noncommutative Jordan superalgebras \cite{AKK,B09},
$3$-dimensional ${\rm Hom}$-Lie algebras \cite{FR23},
$3$-dimensional simple real Bol algebras \cite{FFKMN},
unital $3$-dimensional structurable algebras \cite{AEK},
and so on.
Our work was inspired by a recent study of $3$-dimensional simple unital commutative algebras given by Glizburg, Onerbek, and Pchelintsev \cite{GOP25}.

\medskip 

A $3$-dimensional commutative algebra with unit $e_1$ is determined by the products of the elements of a complement $W$ of $\mathbb Ce_1$; these products define a symmetric bilinear form on $W$ and a $2$-dimensional commutative algebra structure on $W$ (the \emph{non-unital kernel}).
The isomorphism class of the non-unital kernel depends on the choice of $W$; therefore we fix $W$ canonically, as the kernel of the linear form $x\mapsto\tr(L_x)$.
With respect to this complement the non-unital kernel is traceless, and traceless $2$-dimensional commutative algebras correspond to binary cubic forms; hence there are exactly four of them (Lemma~\ref{lem:traceless}).
The main result of the paper (Theorem~\ref{thm:main}) gives a description of complex $3$-dimensional unital commutative algebras 

\section{Preliminaries}\label{sec:prel}

Throughout the paper all algebras and vector spaces are over the complex field $\mathbb C$.
Let $\mathcal A$ be a $3$-dimensional unital commutative algebra.
The unit of $\mathcal A$ will always be denoted by $e_1$.
In multiplication tables we write only the products $e_ie_j$ with $2\le i\le j\le 3$;
the products $e_1e_i=e_ie_1=e_i$, the products obtained by commutativity, and all zero products are omitted.
For $x\in\mathcal A$ let $L_x\colon y\mapsto xy$ be the operator of multiplication by $x$ and put
\[
\tau(x):=\tr(L_x).
\]
Then $\tau\colon \mathcal A\to\mathbb C$ is a linear form and $\tau(e_1)=\tr(\Id)=3$.

\subsection{The canonical non-unital kernel}

Let $W$ be a complement of $\mathbb Ce_1$ in $\mathcal A$, i.e.\ $\mathcal A=\mathbb Ce_1\oplus W$.
For $x,y\in W$ we can write uniquely
\begin{equation}\label{eq:decomp}
xy=\varphi_W(x,y)\,e_1+\mu_W(x,y),\qquad \varphi_W(x,y)\in\mathbb C,\quad \mu_W(x,y)\in W .
\end{equation}
Clearly, $\varphi_W$ is a symmetric bilinear form on $W$ and $\mu_W$ is a commutative multiplication on $W$.
The $2$-dimensional commutative algebra $(W,\mu_W)$ is called the \emph{non-unital kernel} of $\mathcal A$ with respect to $W$.
Choosing a basis $e_2,e_3$ of $W$ in which $\mu_W$ is written in a normal form of the classification of $2$-dimensional commutative algebras \cite{Volkov2019}, one obtains a basis of $\mathcal A$ in which the structure constants $\alpha_{ij}^k$ with $2\le i,j,k\le 3$ are the structure constants of that normal form. 
Note that (see Remark~\ref{rem:oldkernels})
 the list of commutative algebras in \cite{Volkov2019} misses the algebra 
\begin{center}
${\bf D}_4(\frac12,\frac12)$:\quad  
$e_2e_2=e_2$, \quad 
$e_2e_3=\frac12e_2+\frac12e_3$.
\end{center}
However, \emph{the isomorphism class of $(W,\mu_W)$ depends on the choice of $W$.}
Indeed, let $a\colon W\to\mathbb C$ be a linear form and $W_a:=\{x+a(x)e_1 : x\in W\}$.
Identifying $W_a$ with $W$ by $x\mapsto x+a(x)e_1$, a direct computation gives
\begin{equation}\label{eq:shift}
\mu_{W_a}(x,y)=\mu_W(x,y)+a(x)y+a(y)x,\qquad
\varphi_{W_a}(x,y)=\varphi_W(x,y)-a\bigl(\mu_W(x,y)\bigr)-a(x)a(y).
\end{equation}
For example, the algebra with the table $e_2e_2=e_2+e_3,\ e_2e_3=\frac12 e_3$ has the non-unital kernel ${\bf A}_1(\frac12)$ with respect to $W=\langle e_2,e_3\rangle$, while in the basis $e_1,\ e_2-\frac12e_1,\ e_3$ its table is $e_2e_2=\frac14e_1+e_3$, i.e.\ its non-unital kernel with respect to $W_a$, $a=-\frac12 e_2^*$, is ${\bf A}_3$.
Consequently, a case-by-case study of the possible non-unital kernels with respect to an \emph{arbitrary} complement produces many repetitions (see Remark~\ref{rem:oldkernels} below).
To avoid this, we fix the complement canonically.

\begin{lemma}\label{lem:canonical}
Let $\mathcal A$ be a $3$-dimensional unital commutative algebra and let $W$ be a complement of $\mathbb Ce_1$.
\begin{enumerate}
\item[$({\rm A})$] For every $x\in W$ we have $\tau(x)=\tr\bigl(\mu_W(x,\cdot)\bigr)$.
\item[$({\rm B})$] $V(\mathcal A):=\ker\tau$ is a complement of $\mathbb Ce_1$, and it is the unique complement $W$ such that the algebra $(W,\mu_W)$ is \emph{traceless}, i.e.\ $\tr\bigl(\mu_W(x,\cdot)\bigr)=0$ for all $x\in W$.
\item[$({\rm C})$] Conversely, let $V$ be a $2$-dimensional space endowed with a traceless commutative multiplication $\mu$ and a symmetric bilinear form $\varphi$. Then the space $\mathcal A(V,\mu,\varphi):=\mathbb Ce_1\oplus V$, with the multiplication for which $e_1$ is the unit and $xy=\varphi(x,y)e_1+\mu(x,y)$ for $x,y\in V$, is a $3$-dimensional unital commutative algebra with $V(\mathcal A(V,\mu,\varphi))=V$.
\item[$({\rm D})$] $\mathcal A(V,\mu,\varphi)\cong\mathcal A(V',\mu',\varphi')$ if and only if there exists a linear bijection $g\colon V\to V'$ such that
\[
g\bigl(\mu(x,y)\bigr)=\mu'\bigl(g(x),g(y)\bigr)\quad\text{and}\quad \varphi(x,y)=\varphi'\bigl(g(x),g(y)\bigr)\qquad\text{for all } x,y\in V .
\]
\end{enumerate}
\end{lemma}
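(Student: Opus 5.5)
Parts (A)--(D) each reduce to a short direct computation with the decomposition \eqref{eq:decomp} and the shift formula \eqref{eq:shift}; I would prove them in order.

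For (A), fix $x\in W$ and compute the matrix of $L_x$ in a basis $e_1,w_2,w_3$ with $w_2,w_3\in W$. We have $L_x(e_1)=x\in W$, so the $(1,1)$ entry vanishes. For $y\in W$, $L_x(y)=\varphi_W(x,y)e_1+\mu_W(x,y)$, so the $W$-block of the matrix is exactly the matrix of $\mu_W(x,\cdot)$. Hence $\tau(x)=0+\tr\bigl(\mu_W(x,\cdot)\bigr)$.

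For (B), $\tau$ is a nonzero linear form since $\tau(e_1)=3$. Therefore $\ker\tau$ is $2$-dimensional and does not contain $e_1$, so it is a complement of $\mathbb Ce_1$. By (A), a complement $W$ has traceless $(W,\mu_W)$ iff $\tau|_W=0$, i.e.\ iff $W\subseteq\ker\tau$. Since both spaces are $2$-dimensional, this means $W=\ker\tau$, which gives existence and uniqueness at once.

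For (C), the multiplication is bilinear and commutative because $\varphi$ is symmetric and $\mu$ is commutative, and $e_1$ is a unit by definition. Applying (A) with $W=V$, where $\mu_V=\mu$ by construction, gives $\tau|_V=\tr\mu(x,\cdot)=0$. Thus $V\subseteq\ker\tau$, and equality follows by dimension.

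For (D), suppose first that $g$ is given. Define $f\colon\mathcal A\to\mathcal A'$ by $f(e_1)=e_1'$ and $f|_V=g$. Then
\[
f(xy)=\varphi(x,y)e_1'+g(\mu(x,y))=\varphi'(gx,gy)e_1'+\mu'(gx,gy)=f(x)f(y)\qquad(x,y\in V),
\]
and products involving $e_1$ are clearly preserved, so $f$ is an isomorphism.

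Conversely, let $f$ be an isomorphism. Then $f(e_1)$ is a unit of $\mathcal A'$, hence equals $e_1'$ by uniqueness of units. Moreover $L_{f(x)}=fL_xf^{-1}$, so $\tau'(f(x))=\tau(x)$, and therefore $f(V)=V'$. Set $g=f|_V$. Decomposing $f(xy)=f(x)f(y)$ along $\mathbb Ce_1'\oplus V'$ and comparing the $e_1'$ and $V'$ components yields both required identities.

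The only point needing care is this converse direction: we must know that $f$ maps the canonical complement $V$ onto $V'$, which holds because $\tau$ is defined intrinsically. Everything else is routine.
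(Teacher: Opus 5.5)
Your proposal is correct and matches the paper's proof essentially step for step: (A) from the block form of $L_x$ in a basis adapted to $\mathbb Ce_1\oplus W$, (B) and (C) directly from (A) together with $\tau(e_1)=3$, and (D) by noting that an isomorphism fixes the unit and preserves $\tau$, so it maps $V$ onto $V'$, followed by comparing the $e_1$- and $V$-components. The only cosmetic difference is that you announce the shift formula as a tool but never actually use it.
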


\begin{proof} We consider the following four statements below.
\begin{enumerate}
    \item[$({\rm A})$] Let $w_2,w_3$ be a basis of $W$ and $x\in W$. Since $L_x e_1=x\in W$, the $e_1$-coordinate of $L_xe_1$ is zero, and by \eqref{eq:decomp} the $W$-component of $L_xw_i$ is $\mu_W(x,w_i)$. Hence \begin{center}
        $\tau(x)=\tr(L_x)=\tr(\mu_W(x,\cdot))$.
    \end{center}

\item[$({\rm B})$] Since $\tau(e_1)=3\neq0$, $\ker\tau$ is a $2$-dimensional subspace not containing $e_1$, i.e.\ a complement of $\mathbb Ce_1$. By ({\rm A}), $(W,\mu_W)$ is traceless if and only if $W\subseteq\ker \tau$, i.e.\ if and only if $W=\ker\tau$.

\item[$({\rm C})$] The multiplication is obviously commutative and $e_1$ is its unit; by ({\rm A}) applied to $W=V$ we get $\tau|_V=0$, i.e.\ $V=\ker\tau$.

\item[$({\rm D})$] Let $\psi\colon\mathcal A(V,\mu,\varphi)\to\mathcal A(V',\mu',\varphi')$ be an isomorphism. Then $\psi(e_1)=e_1$ because the unit is unique, and $L_{\psi(x)}=\psi L_x\psi^{-1}$ gives $\tau'(\psi(x))=\tau(x)$; hence $\psi(V)=V'$ by ({\rm B}) and ({\rm C}). Put $g=\psi|_V$. For $x,y\in V$ we have
\begin{center}
$\psi(xy)=\varphi(x,y)e_1+g(\mu(x,y))$ and $\psi(x)\psi(y)=\varphi'(g(x),g(y))e_1+\mu'(g(x),g(y))$;
\end{center} comparing the components we obtain the required identities.
Conversely, such a map $g$ extended by $\psi(e_1)=e_1$ is an isomorphism.
\end{enumerate}\end{proof}

\begin{definition}
The $2$-dimensional commutative algebra $(V(\mathcal A),\mu_{V(\mathcal A)})$ is called the \emph{canonical non-unital kernel} of $\mathcal A$.
We say that $\mathcal A$ has the canonical non-unital kernel of type $\mathcal G$ if $(V(\mathcal A),\mu_{V(\mathcal A)})\cong\mathcal G$.
\end{definition}

By Lemma~\ref{lem:canonical}, the type of the canonical non-unital kernel is an isomorphism invariant of $\mathcal A$, and the classification of $3$-dimensional unital commutative algebras reduces to the following two steps:
\begin{enumerate}
\item[(i)] classify the traceless $2$-dimensional commutative algebras $(V,\mu)$ up to isomorphism and compute their automorphism groups;
\item[(ii)] for every such $(V,\mu)$, classify the symmetric bilinear forms $\varphi$ on $V$ up to the action $\varphi\mapsto\varphi(g\,\cdot,g\,\cdot)$ of the group $\Aut(V,\mu)$.
\end{enumerate}
If $e_2,e_3$ is a basis of $V(\mathcal A)$ and we write the multiplication table of $\mathcal A$ in the basis $e_1,e_2,e_3$, then the $e_1$-components of the products $e_ie_j$ $(2\le i,j\le 3)$ are the values $\varphi(e_i,e_j)$, and the remaining components form the table of the canonical non-unital kernel.
Note that, for a basis $e_1,e_2,e_3$ of $\mathcal A$ with $e_1$ the unit and $e_ie_j=\sum_k\alpha_{ij}^ke_k$, the vectors $e_2,e_3$ span $V(\mathcal A)$ if and only if
\begin{equation}\label{eq:tracecond}
\tau(e_2)=\alpha_{22}^2+\alpha_{23}^3=0\qquad\text{and}\qquad \tau(e_3)=\alpha_{23}^2+\alpha_{33}^3=0.
\end{equation}

\subsection{Traceless $2$-dimensional commutative algebras}
Consider the following $2$-dimensional commutative algebras with a basis $e_2,e_3$:
\begin{longtable}{lcll}
${\bf N}$ &:& \multicolumn{2}{l}{all products are zero;}\\
${\bf A}$ &:& $e_2e_2=e_3;$ & \\
${\bf D}$ &:& $e_2e_2=e_2,$ & $e_2e_3=-e_3;$ \\
${\bf E}$ &:& $e_2e_2=e_3,$ & $e_3e_3=e_2.$
\end{longtable}
\noindent All of them are traceless.
In the notation of the classification of $2$-dimensional algebras \cite{Volkov2019} (with the basis $e_1,e_2$ renamed to $e_2,e_3$), we have 
${\bf A}={\bf A}_3,$
${\bf D}={\bf D}_2(-1,-1)$ and ${\bf E}\cong{\bf E}_1(-1,-1,-1,-1)$: if $\omega$ is a primitive cube root of unity, then 
\begin{center}
$u=e_2+e_3$ and $v=\omega e_2+\omega^2e_3$ satisfy $uu=u$, $uv=-u-v$, $vv=v$.
\end{center}
\begin{lemma}\label{lem:traceless}
Every traceless $2$-dimensional commutative algebra is isomorphic to exactly one of the algebras ${\bf N}$, ${\bf A}$, ${\bf D}$, or ${\bf E}$.
\end{lemma}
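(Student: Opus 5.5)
The plan is to identify traceless commutative multiplications on a $2$-dimensional space $V$ with binary cubic forms, and then use the classical classification of binary cubics under $GL_2(\mathbb C)$. A commutative multiplication $\mu$ on $V$ is an element of $\mathrm{Sym}^2(V^*)\otimes V$. As a $GL(V)$-module, $\mathrm{Sym}^2(V^*)\otimes V\cong \mathrm{Sym}^2V^*\otimes V^*\otimes\det V$, which decomposes as $\mathrm{Sym}^3(V^*)\otimes\det V\,\oplus\, V^*$. The $V^*$-summand is exactly the trace form $x\mapsto\tr\mu(x,\cdot)$. So the traceless structures form a $4$-dimensional $GL(V)$-stable subspace, equivariantly identified with binary cubics twisted by a character.

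To make this concrete, I would fix a nonzero skew form $\omega$ on $V$ (an area form, $\omega(e_2,e_3)=1$). To a commutative $\mu$ I attach the trilinear form
\[
c(x,y,z)=\omega\bigl(\mu(x,y),z\bigr).
\]
This form is symmetric in $x,y$. The key computation is that $c$ is \emph{totally} symmetric if and only if $\mu$ is traceless. In coordinates, $\omega(\mu(x,e_2),e_3)-\omega(\mu(x,e_3),e_2)=\tr\mu(x,\cdot)$, which can be checked directly on the basis. Since $\omega$ is nondegenerate, $\mu\mapsto c$ is a linear bijection between traceless $\mu$ and cubic forms $f(x)=c(x,x,x)$. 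Under $g\in GL(V)$ the form $\omega$ scales by $\det g$. Hence $\mu\cong\mu'$ if and only if the corresponding cubics lie in the same $GL_2$-orbit up to a nonzero scalar. Since scalars $\lambda\Id$ multiply cubics by $\lambda^3$, and every nonzero scalar is a cube over $\mathbb C$, the scaling is absorbed: isomorphism classes correspond to $GL_2(\mathbb C)$-orbits of binary cubics.

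Binary cubics over $\mathbb C$ have exactly four orbits, determined by the root multiplicities of $f$:
\begin{itemize}
\item $f=0$;
\item a triple root, normal form $x^3$;
\item a double root and a simple root, normal form $x^2y$;
\item three distinct roots, normal form $xy(x+y)$ or $x^3+y^3$.
\end{itemize}
This follows because $PGL_2$ acts triply transitively on $\mathbb P^1$, and the remaining scalar is absorbed by cube roots. These multiplicity patterns are invariants, so the four orbits are distinct. This gives both existence and the "exactly one" part.

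The last step, which is routine but the most error-prone, is to compute the cubic of each of ${\bf N},{\bf A},{\bf D},{\bf E}$ and check that they realize the four patterns. For $x=ae_2+be_3$:
\begin{itemize}
\item ${\bf N}$ gives $f=0$.
\item ${\bf A}$ gives $xx=a^2e_3$, so $f=a^3$ (triple root).
\item ${\bf D}$ gives $xx=a^2e_2-2abe_3$, so $f=-3a^2b$ (double root).
\item ${\bf E}$ gives $xx=a^2e_3+b^2e_2$, so $f=a^3-b^3$ (three distinct roots).
\end{itemize}
Here I use the polarized relation $c(x,x,x)=\omega(xx,x)$ to read off $f$, up to the constant factors coming from the identification. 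The main care needed is getting the sign convention in the trace/symmetry equivalence right, but any consistent convention works.
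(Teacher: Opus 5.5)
Your proposal is correct and follows essentially the same route as the paper. The paper also fixes an area form $\varpi$, shows that $f_\mu(x,y,z)=\varpi(\mu(x,y),z)$ is totally symmetric exactly when $\mu$ is traceless, uses $F_{\mu^g}=\det(g)^{-1}F_\mu\circ g$ (with $g\mapsto\lambda g$ absorbing the scalar) to reduce to orbits of binary cubics classified by root multiplicities, and obtains $F_{\bf N}=0$, $F_{\bf A}=-a^3$, $F_{\bf D}=3a^2b$, $F_{\bf E}=b^3-a^3$. Your cubics differ from these by a global sign relative to your own convention $\omega(e_2,e_3)=1$, which is harmless since cubics only matter up to a nonzero scalar.
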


\begin{proof}
Let $(V,\mu)$ be a $2$-dimensional commutative algebra and fix a non-zero alternating bilinear form $\varpi$ on $V$ (for a basis $e_2,e_3$ we take $\varpi(e_2,e_3)=1$).
Put \begin{center}
 $f_\mu(x,y,z):=\varpi(\mu(x,y),z)$.
\end{center}
For every linear operator $M$ on $V$ we have \begin{center}$\varpi(My,z)+\varpi(y,Mz)=\tr(M)\,\varpi(y,z)$,
\end{center} since both sides are alternating in $y,z$ and they coincide for $y=e_2$, $z=e_3$.
Applying this to $M=\mu(x,\cdot)$, we get
\[
f_\mu(x,y,z)-f_\mu(x,z,y)=\tr\bigl(\mu(x,\cdot)\bigr)\,\varpi(y,z).
\]
Since $f_\mu$ is symmetric in the first two arguments, $f_\mu$ is a symmetric trilinear form if and only if $\mu$ is traceless.
As $\varpi$ is non-degenerate, $\mu$ is determined by $f_\mu$, and every symmetric trilinear form is of the form $f_\mu$ for some traceless $\mu$.
Hence $\mu\mapsto F_\mu(x):=f_\mu(x,x,x)$ is a bijection between traceless commutative multiplications on $V$ and binary cubic forms on $V$.

For $g\in {\rm GL}(V)$ put $\mu^g(x,y):=g^{-1}\mu(gx,gy)$; then $g\colon(V,\mu^g)\to(V,\mu)$ is an isomorphism, and every multiplication on $V$ isomorphic to $\mu$ is of this form.
Using 
\begin{center}$\varpi(gu,gv)=\det(g)\varpi(u,v)$
\end{center} we obtain
\[
F_{\mu^g}(x)=\det(g)^{-1}F_\mu(gx).
\]
Replacing $g$ by $\lambda g$ multiplies the right-hand side by $\lambda$.
Hence two traceless multiplications on $V$ are isomorphic if and only if their cubic forms belong to the same orbit of the group $\mathbb C^*\times {\rm GL}(V)$ acting by $(c,g)\cdot F=c\,F\circ g$.
A non-zero binary cubic form over $\mathbb C$ is a product of three linear forms, and since ${\rm PGL}_2(\mathbb C)$ acts $3$-transitively on the projective line, its orbit is determined by the multiplicities of its roots: $(3)$, $(2,1)$ or $(1,1,1)$.
Writing $x=ae_2+be_3$, a direct computation gives
\[
F_{\bf N}=0,\qquad F_{{\bf A}}=-a^3,\qquad F_{\bf D}=3a^2b,\qquad F_{\bf E}=b^3-a^3,
\]
which are representatives of the four orbits. This proves the lemma.
\end{proof}

\begin{lemma}\label{lem:aut}
The automorphism groups of the algebras ${\bf N}$, ${\bf A}$, ${\bf D}$, ${\bf E}$ consist of the following linear maps $g$:
\begin{enumerate}[$(1)$]
\item $\Aut({\bf N})\quad =\quad {\rm GL}_2(\mathbb C)$;
\item $\Aut({\bf A})\quad =\quad \{\,g(e_2)=pe_2+qe_3,\ g(e_3)=p^2e_3\ :\ p\in\mathbb C^*,\ q\in\mathbb C\,\}$;
\item $\Aut({\bf D})\quad =\quad \{\,g(e_2)=e_2,\ g(e_3)=se_3\ :\ s\in\mathbb C^*\,\}$;
\item $\Aut({\bf E})\quad =\quad \{\,g(e_2)=\epsilon e_2,\ g(e_3)=\epsilon^2e_3\,\}\cup\{\,g(e_2)=\epsilon e_3,\ g(e_3)=\epsilon^2e_2\,\}$, where $\epsilon^3=1$. 
\end{enumerate}
\end{lemma}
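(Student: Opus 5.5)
Two routes are available. One is the cubic-form correspondence from the proof of Lemma~\ref{lem:traceless}: $g\in{\rm GL}(V)$ is an automorphism of $(V,\mu)$ exactly when $\mu^g=\mu$, i.e.\ when $F_\mu(gx)=\det(g)\,F_\mu(x)$. The other is a direct structural argument, in which we first find subspaces or elements every automorphism must preserve, and then solve a few polynomial equations. I plan to use the structural argument and keep the cubic-form identity as a cross-check. For ${\bf N}$ the multiplication is zero, so every linear bijection is an automorphism and $\Aut({\bf N})={\rm GL}_2(\mathbb C)$.

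\textbf{Case ${\bf A}$.} Here $V^2=\mathbb Ce_3$ and $e_3$ spans the annihilator, so any automorphism satisfies $g(e_3)=re_3$. Write $g(e_2)=pe_2+qe_3$ with $p\neq0$. Then $g(e_2)g(e_2)=p^2e_3$ must equal $g(e_3)=re_3$, which forces $r=p^2$. Conversely every such map preserves the products $e_2e_3=e_3e_3=0$.

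\textbf{Case ${\bf D}$.} The ideal $V^2=V$ does not help, so I will use idempotents instead. Set $x=ae_2+be_3$. Then $x^2=a^2e_2-2abe_3$, and the condition $x^2=x$ gives $a^2=a$ and $-2ab=b$. The nonzero solutions are $a=1,b=0$ only, since $a=0$ forces $b=0$ and $a=1$ gives $-2b=b$. Hence $e_2$ is the unique nonzero idempotent and $g(e_2)=e_2$. Next, $e_3$ spans the annihilator-like subspace $\{y: yy=0\}$: indeed $x^2=0$ gives $a=0$, and $e_3^2=0$. So $g(e_3)=se_3$, and $e_2e_3=-e_3$ is then preserved automatically.

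\textbf{Case ${\bf E}$.} I expect this case to be the main obstacle, because there is no obvious invariant line. I will use the cubic form $F_{\bf E}=b^3-a^3$. Its three root lines are $a=\epsilon b$, which in the basis $u,v$ of the paper are permuted by automorphisms. Alternatively, work directly with idempotents: $x^2=a^2e_3+b^2e_2$, so $x^2=x$ gives $b^2=a$ and $a^2=b$. The nonzero solutions are $(a,b)=(\epsilon,\epsilon^2)$ with $\epsilon^3=1$, i.e.\ exactly three idempotents $u_\epsilon=\epsilon e_2+\epsilon^2e_3$. An automorphism therefore permutes these three idempotents, and since they span $V$ pairwise, $g$ is determined by a permutation in $S_3$. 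It remains to check which of the six permutations extend consistently, i.e.\ whether $u_\epsilon u_{\epsilon'}$ is respected. The products $u_\epsilon u_{\epsilon'}=-u_\epsilon-u_{\epsilon'}$ for $\epsilon\ne\epsilon'$ (as in the displayed $uv=-u-v$) are symmetric, so all six permutations work, provided the linear extension of a permutation of the three vectors (which sum to $0$) is well defined. This holds because the only relation is $u_1+u_\omega+u_{\omega^2}=0$, which is permutation invariant.

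Finally I will identify these six maps with the listed ones. The maps $e_2\mapsto\epsilon e_2,\ e_3\mapsto\epsilon^2e_3$ give the cyclic permutations, and the swaps $e_2\mapsto\epsilon e_3,\ e_3\mapsto\epsilon^2e_2$ give the transpositions. Checking directly that these preserve $e_2^2=e_3$ and $e_3^2=e_2$ is immediate: for instance $(\epsilon e_3)^2=\epsilon^2e_2=g(e_3)$. Since both sets have six elements, this completes the list.
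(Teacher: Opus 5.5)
Your proof is correct. For ${\bf N}$, ${\bf A}$ and ${\bf D}$ it is essentially the paper's computation phrased through invariants. For ${\bf A}$ the paper gets $r=0$, $s=p^2$ directly from $g(e_2)g(e_2)=p^2e_3=g(e_3)$. For ${\bf D}$ it gets $r=0$ from $g(e_3)g(e_3)=0$ and $p=1$, $q=0$ from $g(e_2)g(e_2)=g(e_2)$. These are exactly your ``square-zero line $\mathbb Ce_3$'' and ``unique nonzero idempotent $e_2$'' arguments.

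The genuine difference is in case ${\bf E}$. The paper writes $g(e_2)=pe_2+qe_3$, $g(e_3)=re_2+se_3$ and solves the system $r=q^2$, $s=p^2$, $p=p^4$, $q=q^4$, $pq=0$, $p^3\neq q^3$. You instead show three things: the nonzero idempotents are exactly $u_\epsilon=\epsilon e_2+\epsilon^2e_3$; any two of them form a basis; and $u_\epsilon u_{\epsilon'}=u_{\epsilon''}=-u_\epsilon-u_{\epsilon'}$ for distinct cube roots $\epsilon,\epsilon',\epsilon''$. This gives an injection $\Aut({\bf E})\hookrightarrow S_3$, and the six listed maps, which you verify directly, fill it.

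Your route is a bit longer but explains the answer: $\Aut({\bf E})\cong S_3$ acts faithfully on the three idempotents. These idempotents are, up to the factor $3$, the $V$-components of the primitive idempotents of $\mathbb C^3\cong\mathcal A_{11}^{0,1,0}$. The paper's route is shorter and purely mechanical.

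One small redundancy: you do not need the step showing that all six permutations extend to automorphisms. The bound $|\Aut({\bf E})|\le 6$ together with your direct check of the six distinct listed maps already finishes the proof.
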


\begin{proof}
Let $g(e_2)=pe_2+qe_3$ and $g(e_3)=re_2+se_3$ with $ps-qr\neq0$.
\begin{enumerate}
\item[For ${\bf A}:$] $g(e_2)g(e_2)=p^2e_3$ must be equal to $g(e_3)$, so $r=0$ and $s=p^2$; then $g(e_2)g(e_3)=0=g(e_3)g(e_3)$ holds automatically.

\item[For ${\bf D}:$] we have $(ae_2+be_3)^2=a^2e_2-2abe_3$. The condition $g(e_3)g(e_3)=0$ gives $r=0$, hence $p\neq0\neq s$; the condition $g(e_2)g(e_2)=g(e_2)$ gives $p^2=p$ and $-2pq=q$, i.e.\ $p=1$ and $q=0$; then $g(e_2)g(e_3)=-se_3=-g(e_3)$.

\item[For ${\bf E}:$] we have $(ae_2+be_3)(ce_2+de_3)=bd\,e_2+ac\,e_3$.
The condition $g(e_2)g(e_2)=g(e_3)$ gives $r=q^2$, $s=p^2$; the condition $g(e_3)g(e_3)=g(e_2)$ gives $p=s^2=p^4$ and $q=r^2=q^4$; the condition $g(e_2)g(e_3)=0$ gives $pr=qs=0$, i.e.\ $pq=0$.
Since $ps-qr=p^3-q^3\neq0$, we get either $q=0$, $p^3=1$, or $p=0$, $q^3=1$.
\end{enumerate}
\end{proof}

\begin{remark}\label{rem:oldkernels}
By \eqref{eq:shift}, the non-unital kernel with respect to an arbitrary complement $W$ determines the canonical one: it is obtained by subtracting the ``trace part'', \begin{center}
$\mu_{V}(x,y)=\mu_W(x,y)-a(x)y-a(y)x$ with $a(x)=\frac13\tr(\mu_W(x,\cdot))$.
\end{center}
For the commutative $2$-dimensional algebras used in the first version of this work (in the notation of \cite{Volkov2019}, basis renamed to $e_2,e_3$) this gives the following types of the canonical non-unital kernel:
\begin{longtable}{lll}
${\bf A}_1(\frac12)$: & $e_2e_2=e_2+e_3,\ e_2e_3=\frac12e_3$ & $\longmapsto\ {\bf A}$;\\
${\bf A}_3$: & $e_2e_2=e_3$ & $\longmapsto\ {\bf A}$;\\
${\bf B}_2(\frac12)$: & $e_2e_3=\frac12e_3$ \ (basis $e_3,e_2$) & $\longmapsto\ {\bf D}$;\\
${\bf C}(\frac12,0)$: & $e_2e_2=e_3,\ e_2e_3=\frac12e_2,\ e_3e_3=e_3$ & $\longmapsto\ {\bf A}$;\\
${\bf D}_1(\frac12,0)$: & $e_2e_2=e_2,\ e_2e_3=\frac12e_2$ & $\longmapsto\ {\bf E}$;\\
${\bf D}_4(\frac12,\frac12)$: & $e_2e_2=e_2,\ e_2e_3=\frac12e_2+\frac12e_3$ & $\longmapsto\ {\bf D}$;\\
${\bf D}_2(\alpha,\alpha)$: & $e_2e_2=e_2,\ e_2e_3=\alpha e_3$ & $\longmapsto\ {\bf D}$ if $\alpha\neq\frac12$,\\
&& $\phantom{\longmapsto}\ \ {\bf N}$ if $\alpha=\frac12$;\\
${\bf E}_1(\alpha,\beta,\alpha,\beta)$: & $e_2e_2=e_2,\ e_3e_3=e_3,$ & $\longmapsto\ {\bf E}$ if $\alpha\neq\frac12\neq\beta$,\\
& $e_2e_3=\alpha e_2+\beta e_3$ & $\phantom{\longmapsto}\ \ {\bf D}$ if exactly one of $\alpha,\beta$ equals $\frac12$,\\
&& $\phantom{\longmapsto}\ \ {\bf N}$ if $\alpha=\beta=\frac12$.
\end{longtable}
\noindent Indeed, the cubic form of the traceless part of ${\bf E}_1(\alpha,\beta,\alpha,\beta)$ is proportional to $ab\bigl((2\beta-1)a-(2\alpha-1)b\bigr)$, and the traceless parts of ${\bf B}_2(\frac12)$ and ${\bf D}_4(\frac12,\frac12)$ are equal to $-\frac13{\bf D}$ in the bases $e_2,e_3$ and $e_3,e_2$, respectively.
The algebra ${\bf D}_4(\frac12,\frac12)$ is the commutative member of the family ${\bf D}_4(\alpha,\beta)$ ($e_1e_1=e_1$, $e_1e_2=(1-\alpha)e_1+\beta e_2$, $e_2e_1=\alpha e_1+(1-\beta)e_2$), which is missing in the classification of \cite{Volkov2019}; it is not isomorphic to any algebra listed there. Since Lemma~\ref{lem:traceless} is proved directly, this gap does not affect the classification below.
In particular, algebras whose non-unital kernels (with respect to some complement) are of different types may be isomorphic, and algebras with non-unital kernels ${\bf E}_1(\alpha,\beta,\alpha,\beta)$ and ${\bf E}_1(\alpha',\beta',\alpha',\beta')$ with $(\alpha,\beta)\neq(\alpha',\beta')$ may be isomorphic as well.
\end{remark}

\section{Main part: classification}

\subsection{Case ${\bf N}$}

\begin{proposition}\label{prop:N}
Let $\mathcal{A}$ be a complex $3$-dimensional unital commutative algebra with the canonical non-unital kernel of type ${\bf N}$.
Then $\mathcal{A}$ is isomorphic to exactly one of the following algebras:
\begin{longtable}{lcl}
$\mathcal{A}_{01}$&$: $&all products $e_ie_j$ $(2\le i,j\le3)$ are zero;\\
$\mathcal{A}_{02}$&$: $&$ e_3e_3=e_1;$\\
$\mathcal{A}_{03}$&$:$&$ e_2e_3 = e_1, \quad e_3e_3=e_1.$
\end{longtable}
\end{proposition}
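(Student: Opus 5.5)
By Lemma~\ref{lem:canonical}, an algebra $\mathcal A$ whose canonical non-unital kernel has type ${\bf N}$ is determined up to isomorphism by a symmetric bilinear form $\varphi$ on $V=\langle e_2,e_3\rangle$, taken up to the action of $\Aut({\bf N})$. By Lemma~\ref{lem:aut}, this group is all of ${\rm GL}_2(\mathbb C)$. Condition $({\rm D})$ then says that $\mathcal A(V,0,\varphi)\cong\mathcal A(V,0,\varphi')$ exactly when $\varphi'$ is congruent to $\varphi$. So the proposition reduces to the classical classification of complex symmetric bilinear forms on a $2$-dimensional space up to congruence. Over $\mathbb C$, the complete invariant is the rank, which takes the values $0$, $1$ or $2$. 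This gives at most three classes, and they are pairwise non-isomorphic because rank is preserved by congruence.

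It remains to check that the three listed algebras realize ranks $0$, $1$, $2$, and that each has canonical kernel ${\bf N}$. All three tables satisfy the trace condition \eqref{eq:tracecond} trivially, since they have no $e_2,e_3$-components. Hence $e_2,e_3$ span $V(\mathcal A)$ and $\mu=0$ in each case.

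The Gram matrices are as follows. For $\mathcal A_{01}$ the form is $0$, of rank $0$. For $\mathcal A_{02}$ it is $\operatorname{diag}(0,1)$, of rank $1$. For $\mathcal A_{03}$ it is $\begin{pmatrix}0&1\\1&1\end{pmatrix}$, whose determinant is $-1\neq0$, so the rank is $2$. For rank $2$ it is convenient to note that this matrix is congruent to the identity. For rank $1$ the normal form $\operatorname{diag}(0,1)$ is immediate from diagonalization.

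There is no real obstacle here. The only point needing care is the justification that isomorphism reduces to plain congruence, and this is exactly Lemma~\ref{lem:canonical}$({\rm D})$ combined with $\Aut({\bf N})={\rm GL}_2(\mathbb C)$. The slightly unusual representative chosen for rank $2$ only requires the determinant check above.
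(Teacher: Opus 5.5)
Your proposal is correct and follows essentially the same route as the paper. Lemma~\ref{lem:canonical}(D) together with $\Aut({\bf N})={\rm GL}_2(\mathbb C)$ reduces isomorphism to congruence of the Gram matrices, which over $\mathbb C$ is classified by rank, and the three listed tables realize ranks $0,1,2$; the paper additionally writes out explicit changes of basis to the normal forms, but the argument does not need them.
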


\begin{proof}
Let $e_2,e_3$ be a basis of $V(\mathcal A)$. Then the multiplication table of $\mathcal A$ has the form
\[
e_2e_2=\alpha e_1,\qquad e_2e_3=\beta e_1,\qquad e_3e_3=\gamma e_1 .
\]
By Lemmas~\ref{lem:canonical} and~\ref{lem:aut}, two such algebras are isomorphic if and only if their Gram matrices
$\bigl(\begin{smallmatrix}\alpha&\beta\\ \beta&\gamma\end{smallmatrix}\bigr)$ are congruent, i.e.\ if and only if these matrices have the same rank.
The Gram matrices of $\mathcal A_{01}$, $\mathcal A_{02}$, $\mathcal A_{03}$ have ranks $0$, $1$, $2$, respectively.
Explicitly, if $\alpha=\beta=\gamma=0$, then we have $\mathcal A_{01}$.
If $\beta^2=\alpha\gamma$ and $(\alpha,\beta,\gamma)\neq(0,0,0)$, then (interchanging $e_2$ and $e_3$ if $\gamma=0$) we may assume $\gamma\neq0$, and the change of basis
$e_2'=\gamma e_2-\beta e_3$, $e_3'=\gamma^{-1/2}e_3$ gives $\mathcal A_{02}$.
If $\beta^2\neq\alpha\gamma$, the form $\varphi$ is non-degenerate, hence congruent to the non-degenerate form
$\bigl(\begin{smallmatrix}0&1\\ 1&1\end{smallmatrix}\bigr)$, and we obtain $\mathcal A_{03}$.
\end{proof}

\subsection{Case ${\bf A}$}

\begin{proposition}\label{prop:A3}
Let $\mathcal{A}$ be a complex $3$-dimensional unital commutative algebra with the canonical non-unital kernel of type ${\bf A}$.
Then $\mathcal{A}$ is isomorphic to one of the following algebras:
\begin{longtable}{lcl}
$\mathcal{A}_{04}$&$:$&$e_2e_2=e_3;$\\
$\mathcal{A}_{05}$&$:$&$e_2e_2=e_1+e_3;$\\
$\mathcal{A}_{06}$&$:$&$e_2e_2=e_3,\quad e_2e_3=e_1;$\\
$\mathcal{A}_{07}^\alpha$&$:$&$e_2e_2=\alpha e_1+e_3,\quad e_3e_3=e_1.$
\end{longtable}
\noindent All listed algebras are non-isomorphic, except $\mathcal{A}_{07}^\alpha\cong\mathcal{A}_{07}^{-\alpha}$.
\end{proposition}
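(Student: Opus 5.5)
By Lemma~\ref{lem:canonical} together with Lemma~\ref{lem:aut}, the problem reduces to classifying symmetric bilinear forms $\varphi$ on $V=\langle e_2,e_3\rangle$ up to the action of $\Aut({\bf A})$. We fix a basis $e_2,e_3$ of $V(\mathcal A)$ in which the kernel is ${\bf A}$. The table of $\mathcal A$ is then
\[
e_2e_2=\alpha e_1+e_3,\qquad e_2e_3=\beta e_1,\qquad e_3e_3=\gamma e_1 .
\]
For $g(e_2)=pe_2+qe_3$, $g(e_3)=p^2e_3$ we compute $\varphi'(x,y)=\varphi(gx,gy)$ directly:
\[
\alpha'=p^2\alpha+2pq\beta+q^2\gamma,\qquad \beta'=p^3\beta+p^2q\gamma,\qquad \gamma'=p^4\gamma .
\]
The whole proof consists of reading normal forms and invariants off these three formulas.

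\emph{Existence of normal forms.} We split according to which of $\gamma,\beta$ vanish.
\begin{enumerate}
\item[(a)] If $\gamma\neq0$, take $q=-p\beta/\gamma$. This kills $\beta'$ and gives $\alpha'=p^2(\alpha-\beta^2/\gamma)$. Then choose $p$ with $p^4\gamma=1$, which gives $\mathcal A_{07}^{\alpha'}$.
\item[(b)] If $\gamma=0\neq\beta$, choose $p$ with $p^3\beta=1$, and then $q$ solving $p^2\alpha+2pq\beta=0$. This gives $\mathcal A_{06}$.
\item[(c)] If $\gamma=\beta=0$, then $\alpha'=p^2\alpha$. So $\alpha\neq0$ normalizes to $\mathcal A_{05}$, and $\alpha=0$ gives $\mathcal A_{04}$.
\end{enumerate}

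\emph{Distinguishing the algebras.} The formulas show that the condition $\gamma=0$ is invariant. When $\gamma=0$, the condition $\beta=0$ is invariant, because then $\beta'=p^3\beta$. When $\gamma=\beta=0$, the condition $\alpha=0$ is invariant. This separates the four types $\mathcal A_{04}$, $\mathcal A_{05}$, $\mathcal A_{06}$, $\mathcal A_{07}^\alpha$.

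Within the family $\mathcal A_{07}^\alpha$, the expected main obstacle is identifying exactly which parameters are identified. Suppose $g$ maps $(\alpha,0,1)$ to $(\alpha',0,1)$. Then $\gamma'=1$ forces $p^4=1$. Next, $\beta'=p^2q=0$ forces $q=0$. Hence $\alpha'=p^2\alpha=\pm\alpha$, and $p=i$ indeed realizes $\alpha'=-\alpha$. Therefore $\mathcal A_{07}^\alpha\cong\mathcal A_{07}^{\alpha'}$ if and only if $\alpha'=\pm\alpha$. Finally, Lemma~\ref{lem:canonical}(D) guarantees that no further isomorphisms arise from other choices of complement, so the list is complete and irredundant up to $\alpha\leftrightarrow-\alpha$.
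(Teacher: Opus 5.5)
Your proposal is correct and matches the paper's proof essentially step for step. It uses the same transformation formulas for $(\alpha,\beta,\gamma)$ under $\Aut({\bf A})$, the same case split ($\gamma\neq0$; $\gamma=0\neq\beta$; $\gamma=\beta=0$) with the same normalizing choices of $p,q$, and the same argument ($p^4=1$, $p^2q=0$) showing that $\mathcal A_{07}^\alpha\cong\mathcal A_{07}^{\alpha'}$ if and only if $\alpha'=\pm\alpha$.
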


\begin{proof}
Let $e_2,e_3$ be a basis of $V(\mathcal A)$ in which the canonical non-unital kernel is ${\bf A}$.
Then the multiplication table of $\mathcal A$ has the form
\[
e_2e_2=\alpha e_1+e_3,\qquad e_2e_3=\beta e_1,\qquad e_3e_3=\gamma e_1 .
\]
By Lemmas~\ref{lem:canonical} and~\ref{lem:aut}, the isomorphisms between such algebras are exactly the changes of basis
\[
e_1'=e_1,\qquad e_2'=pe_2+qe_3,\qquad e_3'=p^2e_3\qquad(p\neq0),
\]
and they transform the parameters as follows:
\[
\alpha'=p^2\alpha+2pq\beta+q^2\gamma,\qquad \beta'=p^2(p\beta+q\gamma),\qquad \gamma'=p^4\gamma .
\]
In particular, the conditions $\gamma=0$ and $\gamma=\beta=0$ are invariant, and so is the condition $\gamma=\beta=\alpha=0$.
\begin{enumerate}
\item If $\gamma\neq0$, then the choice $p=\gamma^{-1/4}$, $q=-p\beta\gamma^{-1}$ gives $\gamma'=1$, $\beta'=0$ and $\alpha'=(\alpha\gamma-\beta^2)\gamma^{-3/2}$; we obtain $\mathcal A_{07}^{\alpha'}$.
If $\mathcal A_{07}^{\alpha}\cong\mathcal A_{07}^{\alpha'}$, then $p^4=1$ and $p^2q=0$, hence $q=0$, $p^2=\pm1$ and $\alpha'=\pm\alpha$; conversely, $p=\sqrt{-1}$, $q=0$ gives $\mathcal A_{07}^{\alpha}\cong\mathcal A_{07}^{-\alpha}$.
\item If $\gamma=0$ and $\beta\neq0$, then the choice $p=\beta^{-1/3}$, $q=-\frac{p\alpha}{2\beta}$ gives $\alpha'=0$, $\beta'=1$, and we obtain $\mathcal A_{06}$.
\item If $\gamma=\beta=0$ and $\alpha\neq0$, then the choice $p=\alpha^{-1/2}$, $q=0$ gives $\mathcal A_{05}$.
\item If $\gamma=\beta=\alpha=0$, then we have $\mathcal A_{04}$.
\end{enumerate}
\end{proof}

\subsection{Case ${\bf D}$}

\begin{proposition}\label{prop:D}
Let $\mathcal{A}$ be a complex $3$-dimensional unital commutative algebra with the canonical non-unital kernel of type ${\bf D}$.
Then $\mathcal{A}$ is isomorphic to one of the following algebras:
\begin{longtable}{lcl}
$\mathcal{A}_{08}^\alpha$&$:$&$e_2e_2=\alpha e_1+e_2;$\\
$\mathcal{A}_{09}^\alpha$&$:$&$e_2e_2=\alpha e_1+e_2,\quad e_2e_3=e_1;$\\
$\mathcal{A}_{10}^{\alpha,\beta}$&$:$&$e_2e_2=\alpha e_1+e_2,\quad e_2e_3=\beta e_1,\quad e_3e_3=e_1.$
\end{longtable}
\noindent All listed algebras are non-isomorphic, except $\mathcal{A}_{10}^{\alpha,\beta}\cong\mathcal{A}_{10}^{\alpha,-\beta}$.
\end{proposition}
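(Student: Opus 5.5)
Following the pattern of Propositions~\ref{prop:N} and~\ref{prop:A3}, I fix a basis $e_2,e_3$ of $V(\mathcal A)$ in which the canonical non-unital kernel is ${\bf D}$. The multiplication table then reads
\[
e_2e_2=\alpha e_1+e_2,\qquad e_2e_3=\beta e_1-e_3,\qquad e_3e_3=\gamma e_1 .
\]
By Lemmas~\ref{lem:canonical} and~\ref{lem:aut}(3), the isomorphisms between such algebras are exactly the changes of basis $e_1'=e_1$, $e_2'=e_2$, $e_3'=se_3$ with $s\neq0$. Applying $\varphi'(x,y)=\varphi(gx,gy)$, these act on the parameters by
\[
\alpha'=\alpha,\qquad \beta'=s\beta,\qquad \gamma'=s^2\gamma .
\]
Since the group is a one-dimensional torus acting diagonally, the classification is an elementary orbit computation. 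Note that the listed tables omit the kernel product $e_2e_3=-e_3$; I will read them as including it, per the convention that the kernel is ${\bf D}$.

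I split into three invariant cases.

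\begin{enumerate}
\item If $\gamma\neq0$, I choose $s=\gamma^{-1/2}$. This gives $\gamma'=1$ and $\beta'=\beta\gamma^{-1/2}$, and the algebra is $\mathcal A_{10}^{\alpha,\beta'}$. The only remaining freedom is $s=\pm1$, which fixes $\alpha$ and sends $\beta\mapsto\pm\beta$. Hence $\mathcal A_{10}^{\alpha,\beta}\cong\mathcal A_{10}^{\alpha',\beta'}$ if and only if $\alpha'=\alpha$ and $\beta'=\pm\beta$.
\item If $\gamma=0$ and $\beta\neq0$, I choose $s=\beta^{-1}$ to get $\mathcal A_{09}^\alpha$. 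Here the stabilizer is trivial and $\alpha$ is invariant, so distinct $\alpha$ give non-isomorphic algebras.
\item If $\gamma=\beta=0$, the algebra is $\mathcal A_{08}^\alpha$, again with $\alpha$ invariant.
\end{enumerate}

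The three cases are distinguished by the invariant conditions $\gamma\neq0$; $\gamma=0\neq\beta$; and $\gamma=\beta=0$. These conditions are preserved because $s\neq0$, so algebras from different families are non-isomorphic.

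There is no serious obstacle here. The only points requiring care are:
\begin{itemize}
\item confirming that Lemma~\ref{lem:canonical}(D) really reduces isomorphism to $\Aut({\bf D})$, so that $\alpha$ (the value $\varphi(e_2,e_2)$ on the idempotent-direction vector fixed by every automorphism) is a genuine invariant;
\item checking the transformation formulas, in particular that $e_2'e_3'=s\beta e_1-e_3'$, so the kernel keeps its normal form.
\end{itemize}
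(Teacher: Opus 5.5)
Your orbit computation is correct, but only for the canonical normal form $e_2e_2=ae_1+e_2$, $e_2e_3=be_1-e_3$, $e_3e_3=ce_1$. The step where you identify its representatives with the listed algebras is wrong.

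Nothing is missing from the tables of $\mathcal A_{08}^\alpha$, $\mathcal A_{09}^\alpha$, $\mathcal A_{10}^{\alpha,\beta}$. By the convention of Section~\ref{sec:prel}, only products with the unit, products obtained by commutativity, and zero products are suppressed; so, for example, $e_2e_3=0$ in $\mathcal A_{08}^\alpha$. These tables are deliberately not written in a basis of $V(\mathcal A)$: there $\tau(e_2)=1\neq0$, and $V(\mathcal A)=\langle 3e_2-e_1,\,e_3\rangle$.

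By inserting $-e_3$ into $e_2e_3$ you replace the stated algebras by different ones. Your $\mathcal A_{08}^{\alpha}$ is isomorphic to the literal $\mathcal A_{08}^{(\alpha-2)/9}$. For instance, the literal $\mathcal A_{08}^{0}\cong\mathbb C\times\mathbb C[x]/(x^2)$ is associative, while your $\mathcal A_{08}^{0}$ is not. So you have found the correct set of isomorphism classes, but you have not proved three things: that $\mathcal A$ is isomorphic to one of the algebras actually listed, that the listed algebras have canonical kernel ${\bf D}$, and that their parameters $\alpha,\beta$ behave as claimed.

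The missing ingredient is the change of basis $f_2=\frac13(e_1+e_2)$, which turns the canonical table into
\[
f_2f_2=\tfrac{a-2}{9}\,e_1+f_2,\qquad f_2e_3=\tfrac b3\,e_1,\qquad e_3e_3=ce_1 .
\]
You also need the converse check: for such a table, the vectors $3f_2-e_1$ and $e_3$ satisfy \eqref{eq:tracecond} and restore the canonical form, so the listed algebras do have kernel ${\bf D}$. With this translation:
\begin{itemize}
\item the invariant is $\alpha=\frac{a-2}{9}$ rather than $a$;
\item for $c=0\neq b$ you must take $s=3b^{-1}$ to reach $f_2e_3=e_1$;
\item for $c\neq0$, the choice $s=c^{-1/2}$ yields $\beta=\frac{b}{3\sqrt c}$, determined up to sign.
\end{itemize}
Your case split and non-isomorphism arguments then go through unchanged, and the proof becomes the paper's.
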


\begin{proof}
Let $e_2,e_3$ be a basis of $V(\mathcal A)$ in which the canonical non-unital kernel is ${\bf D}$.
Then the multiplication table of $\mathcal A$ has the form
\begin{equation}\label{eq:Dcanon}
e_2e_2=ae_1+e_2,\qquad e_2e_3=be_1-e_3,\qquad e_3e_3=ce_1 .
\end{equation}
By Lemmas~\ref{lem:canonical} and~\ref{lem:aut}, the isomorphisms between such algebras are exactly the changes of basis $e_1'=e_1$, $e_2'=e_2$, $e_3'=se_3$ $(s\neq0)$, and they transform the parameters by
\[
(a,b,c)\longmapsto(a,\ sb,\ s^2c).
\]
In the basis $e_1$, $f_2:=\frac13(e_1+e_2)$, $e_3$ the table \eqref{eq:Dcanon} becomes
\begin{equation}\label{eq:Dnice}
f_2f_2=\tfrac{a-2}{9}\,e_1+f_2,\qquad f_2e_3=\tfrac b3\,e_1,\qquad e_3e_3=ce_1 .
\end{equation}
Conversely, for the table \eqref{eq:Dnice} the vectors $e_2=3f_2-e_1$ and $e_3$ satisfy \eqref{eq:tracecond} and give back \eqref{eq:Dcanon}.
Hence $\alpha=\frac{a-2}9$ is an invariant, and the conditions $c=0$ and $b=c=0$ are invariant.
\begin{enumerate}
\item If $c\neq0$, the choice $s=c^{-1/2}$ gives $c'=1$ and \eqref{eq:Dnice} becomes $\mathcal A_{10}^{\alpha,\beta}$ with $\beta=\frac{b}{3\sqrt c}$; the two choices of $\sqrt c$ give $\beta$ and $-\beta$, and these are the only possibilities because $c=c'=1$ forces $s=\pm1$.
\item If $c=0$ and $b\neq0$, the choice $s=3b^{-1}$ gives $b'=3$, and \eqref{eq:Dnice} becomes $\mathcal A_{09}^\alpha$.
\item If $c=b=0$, then \eqref{eq:Dnice} is $\mathcal A_{08}^\alpha$.
\end{enumerate}
\end{proof}

\begin{remark}
With respect to the complement $\langle f_2,e_3\rangle$ used in the tables of $\mathcal A_{08}^\alpha$, $\mathcal A_{09}^\alpha$, $\mathcal A_{10}^{\alpha,\beta}$, the non-unital kernel is ${\bf D}_2(0,0)$: $f_2f_2=f_2$.
\end{remark}

\subsection{Case ${\bf E}$}

\begin{proposition}\label{prop:E}
Let $\mathcal{A}$ be a complex $3$-dimensional unital commutative algebra with the canonical non-unital kernel of type ${\bf E}$.
Then $\mathcal{A}$ is isomorphic to an algebra
\begin{longtable}{lcl}
$\mathcal{A}_{11}^{\alpha,\beta,\gamma}$&$:$&$e_2e_2=\alpha e_1+e_3,\quad e_2e_3=\beta e_1,\quad e_3e_3=\gamma e_1+e_2.$
\end{longtable}
\noindent Moreover, $\mathcal{A}_{11}^{\alpha,\beta,\gamma}\cong\mathcal{A}_{11}^{\alpha',\beta',\gamma'}$ if and only if
\[
(\alpha',\beta',\gamma')=(\epsilon\alpha,\beta,\epsilon^2\gamma)\quad\text{or}\quad(\alpha',\beta',\gamma')=(\epsilon\gamma,\beta,\epsilon^2\alpha)\qquad\text{for some } \epsilon\in\mathbb C,\ \epsilon^3=1;
\]
equivalently, if and only if $\beta'=\beta$, $\alpha'\gamma'=\alpha\gamma$ and $\alpha'^3+\gamma'^3=\alpha^3+\gamma^3$.
\end{proposition}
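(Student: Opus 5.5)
Following the pattern of Propositions~\ref{prop:N}--\ref{prop:D}, we fix a basis $e_2,e_3$ of $V(\mathcal A)$ in which the canonical non-unital kernel is exactly ${\bf E}$, so $e_2e_2=e_3$, $e_3e_3=e_2$, $e_2e_3=0$ in the kernel. The $e_1$-components of the products then form an arbitrary symmetric bilinear form $\varphi$ with $\varphi(e_2,e_2)=\alpha$, $\varphi(e_2,e_3)=\beta$, $\varphi(e_3,e_3)=\gamma$. This already shows that $\mathcal A\cong\mathcal A_{11}^{\alpha,\beta,\gamma}$. Conversely, each table $\mathcal A_{11}^{\alpha,\beta,\gamma}$ satisfies \eqref{eq:tracecond}, because all $\alpha_{ij}^k$ with $2\le i,j,k\le3$ appearing in the trace conditions vanish. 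So by Lemma~\ref{lem:canonical}(C) it has canonical non-unital kernel ${\bf E}$ with respect to $\langle e_2,e_3\rangle$.

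For the isomorphism criterion we apply Lemma~\ref{lem:canonical}(D). Two algebras $\mathcal A_{11}^{\alpha,\beta,\gamma}$ and $\mathcal A_{11}^{\alpha',\beta',\gamma'}$ are isomorphic if and only if some $g\in\Aut({\bf E})$ satisfies $\varphi(x,y)=\varphi'(gx,gy)$. By Lemma~\ref{lem:aut}(4) there are six such $g$, and we compute the transformed form for each.
\begin{itemize}
\item For $g(e_2)=\epsilon e_2$, $g(e_3)=\epsilon^2 e_3$ we get $\alpha=\epsilon^2\alpha'$, $\beta=\epsilon^3\beta'=\beta'$, $\gamma=\epsilon^4\gamma'=\epsilon\gamma'$. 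Hence $(\alpha',\beta',\gamma')=(\epsilon\alpha,\beta,\epsilon^2\gamma)$.
\item For $g(e_2)=\epsilon e_3$, $g(e_3)=\epsilon^2 e_2$ we get $\alpha=\epsilon^2\gamma'$, $\beta=\beta'$, $\gamma=\epsilon\alpha'$. Hence $(\alpha',\beta',\gamma')=(\epsilon^2\gamma,\beta,\epsilon\alpha)$.
\end{itemize}
Since $\epsilon$ ranges over all cube roots of unity, and $\epsilon^2$ does too, replacing $\epsilon$ by $\epsilon^2$ in the second case gives exactly the stated form $(\epsilon\gamma,\beta,\epsilon^2\alpha)$.

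Finally, we prove the equivalent invariant description. The direction ``orbit $\Rightarrow$ invariants'' is immediate: $\epsilon\cdot\epsilon^2=1$, $\epsilon^3=1$, and the swap preserves both $\alpha\gamma$ and $\alpha^3+\gamma^3$. For the converse, suppose $\alpha'\gamma'=\alpha\gamma$ and $\alpha'^3+\gamma'^3=\alpha^3+\gamma^3$. Then $\{\alpha'^3,\gamma'^3\}$ and $\{\alpha^3,\gamma^3\}$ are the root sets of the same quadratic $t^2-(\alpha^3+\gamma^3)t+(\alpha\gamma)^3$, so they coincide as multisets. Swapping if necessary, we may assume $\alpha'^3=\alpha^3$ and $\gamma'^3=\gamma^3$, so $\alpha'=\epsilon\alpha$ and $\gamma'=\eta\gamma$ with $\epsilon^3=\eta^3=1$. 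If $\alpha\gamma\neq0$, then $\alpha'\gamma'=\alpha\gamma$ forces $\eta=\epsilon^{-1}=\epsilon^2$, which gives the first orbit form (or the second after the swap). If $\alpha=0$, then $\alpha'=0$ and $\gamma'=\eta\gamma$; we simply choose $\epsilon=\eta^2$, so that $\epsilon^2=\eta$. The case $\gamma=0$ is symmetric.

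This last bookkeeping step, namely matching the cube roots consistently and handling the degenerate cases where $\alpha\gamma=0$, is the only point requiring care; everything else is direct substitution.
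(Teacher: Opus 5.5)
Your proof is correct and follows the paper's argument essentially step by step: you use Lemma~\ref{lem:canonical}(D) to reduce to the action of the six elements of $\Aut({\bf E})$ on $\varphi$, and you recover the invariant description from the quadratic $t^2-(\alpha^3+\gamma^3)t+(\alpha\gamma)^3$ together with a swap. The differences are cosmetic: you compute all six automorphisms directly rather than only the scaling and the swap, and you spell out two points the paper leaves brief, namely the $\alpha\gamma=0$ bookkeeping and the check of \eqref{eq:tracecond}, which the paper defers to the main theorem.
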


\begin{proof}
If $e_2,e_3$ is a basis of $V(\mathcal A)$ in which the canonical non-unital kernel is ${\bf E}$, then the table of $\mathcal A$ is $\mathcal A_{11}^{\alpha,\beta,\gamma}$ with $\alpha=\varphi(e_2,e_2)$, $\beta=\varphi(e_2,e_3)$, $\gamma=\varphi(e_3,e_3)$.
By Lemmas~\ref{lem:canonical} and~\ref{lem:aut}, the isomorphisms between such algebras are given by the six elements of $\Aut({\bf E})$.
The change of basis $e_2'=\epsilon e_2$, $e_3'=\epsilon^2e_3$ gives \begin{center}
$e_2'e_2'=\epsilon^2\alpha e_1+e_3'$, $e_2'e_3'=\beta e_1$, $e_3'e_3'=\epsilon\gamma e_1+e_2'$,
\end{center} and the change of basis $e_2'=e_3$, $e_3'=e_2$ gives $(\alpha',\beta',\gamma')=(\gamma,\beta,\alpha)$. This proves the first description.

The quantities $\beta$, $\alpha\gamma$ and $\alpha^3+\gamma^3$ are obviously invariant. Conversely, suppose they coincide for $(\alpha,\beta,\gamma)$ and $(\alpha',\beta',\gamma')$. Then 
\begin{center}$\{\alpha^3,\gamma^3\}=\{\alpha'^3,\gamma'^3\}$ (both are the roots of $X^2-(\alpha^3+\gamma^3)X+(\alpha\gamma)^3$);
\end{center} applying the interchange of $e_2$ and $e_3$ we may assume $\alpha'^3=\alpha^3$ and $\gamma'^3=\gamma^3$. If $\alpha\neq0$, then $\alpha'=\epsilon\alpha$ with $\epsilon^3=1$, and $\gamma'=\alpha\gamma/\alpha'=\epsilon^2\gamma$. If $\alpha=0$, then $\alpha'=0$ and $\gamma'=\epsilon^2\gamma$ for a suitable cube root of unity $\epsilon$.
\end{proof}

\subsection{The main theorem}

\begin{theorem}\label{thm:main}
Let $\mathcal{A}$ be a complex $3$-dimensional unital commutative algebra.
Then $\mathcal{A}$ is isomorphic to one of the following algebras:

\begin{longtable}{lll}
\hline
$\mathcal{A}$ & multiplication table & canonical kernel \\
\hline
$\mathcal{A}_{01}$ & all products $e_ie_j$ $(2\le i,j\le 3)$ are zero & ${\bf N}$\\[0.3em]
$\mathcal{A}_{02}$ & $e_3e_3=e_1$ & ${\bf N}$\\[0.3em]
$\mathcal{A}_{03}$ & $e_2e_3=e_1\quad e_3e_3=e_1$ & ${\bf N}$\\[0.3em]
\hline
$\mathcal{A}_{04}$ & $e_2e_2=e_3$ & ${\bf A}$\\[0.3em]
$\mathcal{A}_{05}$ & $e_2e_2=e_1+e_3$ & ${\bf A}$\\[0.3em]
$\mathcal{A}_{06}$ & $e_2e_2=e_3\quad e_2e_3=e_1$ & ${\bf A}$\\[0.3em]
$\mathcal{A}_{07}^\alpha$ & $e_2e_2=\alpha e_1+e_3\quad e_3e_3=e_1$ & ${\bf A}$\\[0.3em]
\hline
$\mathcal{A}_{08}^\alpha$ & $e_2e_2=\alpha e_1+e_2$ & ${\bf D}$\\[0.3em]
$\mathcal{A}_{09}^\alpha$ & $e_2e_2=\alpha e_1+e_2\quad e_2e_3=e_1$ & ${\bf D}$\\[0.3em]
$\mathcal{A}_{10}^{\alpha,\beta}$ & $e_2e_2=\alpha e_1+e_2\quad e_2e_3=\beta e_1\quad e_3e_3=e_1$ & ${\bf D}$\\[0.3em]
\hline
$\mathcal{A}_{11}^{\alpha,\beta,\gamma}$ & $e_2e_2=\alpha e_1+e_3\quad e_2e_3=\beta e_1\quad e_3e_3=\gamma e_1+e_2$ & ${\bf E}$\\[0.3em]
\hline
\end{longtable}

\noindent Here $\alpha,\beta,\gamma\in\mathbb C$.
All listed algebras are non-isomorphic, except
\[
\mathcal{A}_{07}^\alpha\cong\mathcal{A}_{07}^{-\alpha},\qquad
\mathcal{A}_{10}^{\alpha,\beta}\cong\mathcal{A}_{10}^{\alpha,-\beta},\qquad
\mathcal{A}_{11}^{\alpha,\beta,\gamma}\cong\mathcal{A}_{11}^{\epsilon\alpha,\beta,\epsilon^2\gamma}\cong\mathcal{A}_{11}^{\gamma,\beta,\alpha}\quad(\epsilon^3=1).
\]
\end{theorem}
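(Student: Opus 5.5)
The theorem is assembled from the results already established; the plan is to combine them so that every algebra is covered and no isomorphisms are lost or spuriously added.

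First, given an arbitrary $3$-dimensional unital commutative algebra $\mathcal A$, I apply Lemma~\ref{lem:canonical}(B) to fix the canonical complement $V(\mathcal A)=\ker\tau$. The induced multiplication $\mu_{V(\mathcal A)}$ is traceless, so by Lemma~\ref{lem:traceless} it is isomorphic to exactly one of ${\bf N}$, ${\bf A}$, ${\bf D}$, ${\bf E}$. I choose a basis $e_2,e_3$ of $V(\mathcal A)$ realizing that normal form. Then, according to the type, Proposition~\ref{prop:N}, \ref{prop:A3}, \ref{prop:D} or \ref{prop:E} places $\mathcal A$ in the corresponding block of the table. This proves that the list is exhaustive.

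For non-isomorphism across blocks, I use Lemma~\ref{lem:canonical}(D). An isomorphism fixes $e_1$ and maps the canonical complement onto the canonical complement, so it restricts to an isomorphism of canonical non-unital kernels. Hence the type of the canonical kernel is an invariant. Lemma~\ref{lem:traceless} says the four types are pairwise non-isomorphic, so algebras from different blocks are never isomorphic.

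Within a block, the isomorphism statements are exactly those of the four propositions, so the remaining task is to check that each listed table really has the claimed canonical kernel type.
\begin{itemize}
\item For $\mathcal A_{01}$ through $\mathcal A_{07}^\alpha$ and for $\mathcal A_{11}^{\alpha,\beta,\gamma}$, condition \eqref{eq:tracecond} holds directly, so $\langle e_2,e_3\rangle=V(\mathcal A)$ and the kernel can be read off the table.
\item For $\mathcal A_{08}^\alpha,\mathcal A_{09}^\alpha,\mathcal A_{10}^{\alpha,\beta}$ the table is written in the non-canonical basis $f_2,e_3$. Here I rely on the converse computation in the proof of Proposition~\ref{prop:D}: the basis $3f_2-e_1,e_3$ satisfies \eqref{eq:tracecond} and has kernel ${\bf D}$.
\end{itemize}

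The step needing the most care is that last one, together with making sure that the invariants used inside Proposition~\ref{prop:D} are well defined. Specifically, $\alpha=\frac{a-2}{9}$ and the vanishing conditions on $b,c$ must be computed in the canonical basis, where the only remaining freedom is $\Aut({\bf D})$. Once that is confirmed, the parameter identifications $\mathcal A_{07}^\alpha\cong\mathcal A_{07}^{-\alpha}$, $\mathcal A_{10}^{\alpha,\beta}\cong\mathcal A_{10}^{\alpha,-\beta}$ and the $\epsilon$-action and swap for $\mathcal A_{11}$ follow verbatim from the propositions, which completes the theorem.
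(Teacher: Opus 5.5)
Your proposal is correct and follows essentially the same route as the paper. The type of the canonical non-unital kernel is an isomorphism invariant by Lemmas~\ref{lem:canonical} and~\ref{lem:traceless}, and the kernel type of each listed table is confirmed via \eqref{eq:tracecond}: directly for $\mathcal A_{01}$--$\mathcal A_{07}^\alpha$ and $\mathcal A_{11}^{\alpha,\beta,\gamma}$, and for $\mathcal A_{08}^\alpha$--$\mathcal A_{10}^{\alpha,\beta}$ through the change of basis $e_2\mapsto 3e_2-e_1$ from the proof of Proposition~\ref{prop:D}. Propositions~\ref{prop:N}, \ref{prop:A3}, \ref{prop:D}, \ref{prop:E} then give both exhaustiveness and the isomorphisms within each block, and the invariance of $\alpha=\frac{a-2}{9}$ you flag is already settled in Proposition~\ref{prop:D}, since $\Aut({\bf D})$ fixes $a$.
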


\begin{proof}
By Lemmas~\ref{lem:canonical} and~\ref{lem:traceless}, the canonical non-unital kernel of $\mathcal A$ is of exactly one of the types ${\bf N}$, ${\bf A}$, ${\bf D}$, ${\bf E}$, and this type is an isomorphism invariant. For the tables of $\mathcal A_{01}$--$\mathcal A_{07}^\alpha$ and $\mathcal A_{11}^{\alpha,\beta,\gamma}$ the vectors $e_2,e_3$ satisfy \eqref{eq:tracecond}, so the canonical kernel can be read off from the table; for $\mathcal A_{08}^\alpha$--$\mathcal A_{10}^{\alpha,\beta}$ it is ${\bf D}$ by the proof of Proposition~\ref{prop:D}. Now the statement follows from Propositions~\ref{prop:N}, \ref{prop:A3}, \ref{prop:D} and~\ref{prop:E}.
\end{proof}

\begin{corollary}
The algebras $\mathcal A_{01},\ldots,\mathcal A_{06}$ are the only algebras in the list without parameters; the families $\mathcal A_{07}^\alpha$, $\mathcal A_{08}^\alpha$, $\mathcal A_{09}^\alpha$ depend on one parameter, $\mathcal A_{10}^{\alpha,\beta}$ on two parameters, and $\mathcal A_{11}^{\alpha,\beta,\gamma}$ on three parameters.
A complete system of invariants of $\mathcal A_{11}^{\alpha,\beta,\gamma}$ is $\bigl(\beta,\ \alpha\gamma,\ \alpha^3+\gamma^3\bigr)$.
\end{corollary}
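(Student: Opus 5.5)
The corollary has two parts: a parameter count, and the claim that $\bigl(\beta,\alpha\gamma,\alpha^3+\gamma^3\bigr)$ is a complete system of invariants for $\mathcal A_{11}^{\alpha,\beta,\gamma}$. I will take both from Theorem~\ref{thm:main} and Proposition~\ref{prop:E}.

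For the first part, I read off the multiplication tables in Theorem~\ref{thm:main}. The algebras $\mathcal A_{01},\dots,\mathcal A_{06}$ carry no parameters. The remaining families carry one, one, one, two and three parameters respectively. To make sure these are genuine parameter counts, I check that none of the families collapses under isomorphism.

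\begin{itemize}
\item By Proposition~\ref{prop:A3}, isomorphism classes in $\mathcal A_{07}^\alpha$ are the fibres of $\alpha\mapsto\alpha^2$, which are finite.
\item By Proposition~\ref{prop:D}, $\alpha$ is an isomorphism invariant in $\mathcal A_{08}^\alpha$ and $\mathcal A_{09}^\alpha$. In $\mathcal A_{10}^{\alpha,\beta}$ the classes are the fibres of $(\alpha,\beta)\mapsto(\alpha,\beta^2)$, again finite.
\item By Proposition~\ref{prop:E}, each isomorphism class in $\mathcal A_{11}^{\alpha,\beta,\gamma}$ is an orbit of a group of order $6$, hence finite.
\end{itemize}

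Since every class is finite, each family depends on exactly the stated number of essential parameters. The type of the canonical kernel is an isomorphism invariant, so the families are pairwise disjoint up to isomorphism. They are also disjoint from the six algebras without parameters.

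For the second part, I quote the last equivalence of Proposition~\ref{prop:E}. It says that $\mathcal A_{11}^{\alpha,\beta,\gamma}\cong\mathcal A_{11}^{\alpha',\beta',\gamma'}$ if and only if $\beta'=\beta$, $\alpha'\gamma'=\alpha\gamma$ and $\alpha'^3+\gamma'^3=\alpha^3+\gamma^3$. This is exactly the statement that $\bigl(\beta,\alpha\gamma,\alpha^3+\gamma^3\bigr)$ is a complete system of invariants.

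I would add one point so that these really are invariants of the abstract algebra, not just of the chosen table. By Theorem~\ref{thm:main}, an algebra isomorphic to some $\mathcal A_{11}$ has canonical kernel of type ${\bf E}$. So the triple is attached to the isomorphism class by Lemma~\ref{lem:canonical}(D), together with the invariance computed under the six elements of $\Aut({\bf E})$ from Lemma~\ref{lem:aut}.

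The argument has no real obstacle. The only delicate point is the converse direction in Proposition~\ref{prop:E}, namely the case $\alpha=0$ or $\gamma=0$, and that case is already handled there.
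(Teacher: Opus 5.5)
Your proposal is correct and takes the same approach as the paper, which gives no separate proof and treats the corollary as read off from Theorem~\ref{thm:main} (the parameter counts, with only finitely many identifications inside each family) and from the final equivalence in Proposition~\ref{prop:E} (completeness of $(\beta,\alpha\gamma,\alpha^3+\gamma^3)$). One small slip: the kernel type alone does not separate $\mathcal A_{07}^\alpha$ from $\mathcal A_{04}$--$\mathcal A_{06}$ (all of type ${\bf A}$), nor $\mathcal A_{08}^\alpha$, $\mathcal A_{09}^\alpha$, $\mathcal A_{10}^{\alpha,\beta}$ from each other (all of type ${\bf D}$), so for that disjointness you should cite the invariant conditions in Propositions~\ref{prop:A3} and~\ref{prop:D}, or equivalently the non-isomorphism clause of Theorem~\ref{thm:main}.
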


\begin{remark}
The associative algebras in the list of Theorem~\ref{thm:main} are exactly
\[
\mathcal A_{01}\cong\mathbb C[x,y]/(x,y)^2,\qquad
\mathcal A_{04}\cong\mathbb C[x]/(x^3),\qquad
\mathcal A_{08}^0\cong\mathbb C\times\mathbb C[x]/(x^2),\qquad
\mathcal A_{11}^{0,1,0}\cong\mathbb C\times\mathbb C\times\mathbb C,
\]
in accordance with the classical classification of $3$-dimensional unital commutative associative algebras.
For instance, if $E_1,E_2,E_3$ are the primitive idempotents of $\mathbb C^3$ and $\omega$ is a primitive cube root of unity, then $e_2=E_1+\omega E_2+\omega^2E_3$ and $e_3=E_1+\omega^2E_2+\omega E_3$ satisfy $e_2e_2=e_3$, $e_3e_3=e_2$, $e_2e_3=e_1$.
\end{remark}

\end{document}